\numberwithin{equation}{section}
\newtheorem*{THM}{Main Theorem}
\newtheorem*{quest}{Question}
\newtheorem{thm}[equation]{Theorem}
\newtheorem{lem}[equation]{Lemma}
\theoremstyle{definition}
\newtheorem{defn}[equation]{Definition}
\theoremstyle{remark}
\newtheorem{rem}[equation]{Remark}
\newcommand{\norm}[1]{\left\Vert#1\right\Vert}
\newcommand{\set}[1]{\left\{#1\right\}}
\newcommand{\Real}{\mathbb R}
\newcommand{\To}{\longrightarrow}
\begin{document}

\title{Complete metric spaces with property $(Z)$ are length spaces}
\author[A.\ Avil\'{e}s]{Antonio Avil\'es}
\address{Universidad de Murcia, Departamento de Matem\'{a}ticas, Campus de Espinardo 30100 Murcia, Spain.} 
\email{avileslo@um.es}

\author[G. Mart\'inez-Cervantes]{Gonzalo Mart\'inez-Cervantes}
\address{Universidad de Murcia, Departamento de Matem\'{a}ticas, Campus de Espinardo 30100 Murcia, Spain.}
\email{gonzalo.martinez2@um.es}

\thanks{Authors supported by projects MTM2014-54182-P and MTM2017-86182-P (Government of Spain, AEI/FEDER, EU) and by  project 19275/PI/14 (Fundaci\'on S\'eneca)}

\keywords{length metric space, Lipschitz-free space, Daugavet property, diameter-2 property, strongly exposed points, preserved extreme points}

\subjclass[2010]{46B20,54E50}

\begin{abstract}

We prove that every complete metric space with property (Z) is a length space. These answers questions posed by Garc\'{i}a-Lirola, Proch\'{a}zka and Rueda Zoca, and by Becerra Guerrero, L\'{o}pez-P\'{e}rez and Rueda Zoca, related to the structure of Lipschitz-free Banach spaces of metric spaces.

\end{abstract}

\maketitle

\section{Introduction}

Let $M$ be a metric space with a distinguished point $0 \in M$. The set
$$Lip_0(M)= \set{ f:M \rightarrow \Real \mbox{ Lipschitz} : f(0)=0}$$
is a Banach space when endowed with the norm $$\norm{f} = \sup_{x \neq y} \frac{|f(x)-f(x)|}{d(x,y)}.$$

It turns out that $Lip_0(M)$ is always a dual space and that the space generated by all functionals of the form $\delta_m : Lip_0(M) \rightarrow \Real$ with $\delta_m(f)=f(m)$ for every $f \in Lip_0(M)$ is a predual which is usually denoted by $\mathcal{F}(M)$ and it is called the Lipschitz-free space of $M$.

During the last decades the relations between metric properties of $M$ and geometrical properties of $\mathcal{F}(M)$ have been deeply studied. In this paper we are going to focus on the following two properties (we denote the maximum of two real numbers $a$ and $b$ by $a \vee b$ and its minimum by $a \wedge b$):

\begin{defn}
	A pair $(x,y)$ of points of $M$ with $x\neq y$ is said to have property $(Z)$ if for every $\varepsilon>0$ there exists $z\in M\setminus\{x,y\}$  such that
	$$d(x,z) + d(z,y) \leq d(x,y) +  \varepsilon\cdot  (d(x,z)\wedge d(z,y)).$$
	 $M$ is said to have property $(Z)$ if
	 each pair of distinct points of $M$ has  property $(Z)$. 
\end{defn}

\begin{defn}
	A complete metric space $M$ is said to be a length space if for every $x,y\in M$ and every $\delta>0$ there exists $z\in M$ such that 
	$$d(x,z) \vee d(z,y) \leq d(x,y)/2 + \delta.$$ 
\end{defn}

Property $(Z)$ was introduced in \cite{Zoriginal} in order to characterize local metric spaces in the compact case. For a complete metric space being local is equivalent to being length \cite[Proposition 2.4]{Zpaper}. 
Papers \cite{Zoriginal} and \cite{Zpaper} are devoted to 
the study of spaces of Lipschitz functions with the Daugavet property. Recall that a Banach space $X$ is said to have the Daugavet property if $\| I+T \| = 1+\| T\|$ for every rank-one operator $T:X \rightarrow X$, where $I:X \rightarrow X$ denotes the identity operator. 

The Daugavet property for Lipschitz-free spaces is characterized as follows: 
\begin{thm}[\cite{Zpaper},\cite{Zoriginal}]
	\label{thmequivalentlegnth}
	Let $M$ be a complete metric space. $\mathcal{F}(M)$ has the Daugavet property if and only if $Lip_0(M)$ has the Daugavet property if and only if $M$ is length.
\end{thm}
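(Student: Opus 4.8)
This equivalence goes back to the cited works; here is the route I would take. First I would shorten the cycle of implications. The Daugavet property passes from a dual space to any predual --- the adjoint of a rank-one operator is rank one and $\|\operatorname{Id}_X+T\|=\|(\operatorname{Id}_X+T)^*\|=\|\operatorname{Id}_{X^*}+T^*\|$ --- so whenever $X^*$ has it, so does $X$; applied to $X^*=Lip_0(M)=\mathcal F(M)^*$ this yields ``$Lip_0(M)$ Daugavet $\Rightarrow$ $\mathcal F(M)$ Daugavet'' at no cost. It then remains to prove that $M$ length implies $Lip_0(M)$ Daugavet, and that $\mathcal F(M)$ Daugavet implies $M$ length. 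For both I would use the slice form of the Daugavet property (Kadets--Shvidkoy--Sirotkin--Werner): a Banach space $X$ has it iff for every $x\in S_X$, every $\varepsilon>0$ and every slice $S$ of $B_X$ there is $y\in S$ with $\|x+y\|>2-\varepsilon$; together with the molecular picture $B_{\mathcal F(M)}=\overline{\operatorname{conv}}\,\{m_{uv}:u\ne v\}$, $m_{uv}=(\delta_u-\delta_v)/d(u,v)$, $\|m_{uv}\|=1$, and the identity $m_{uv}=\tfrac{d(u,z)}{d(u,v)}m_{uz}+\tfrac{d(z,v)}{d(u,v)}m_{zv}$, valid for every $z$.

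For ``$M$ length $\Rightarrow Lip_0(M)$ Daugavet'' I would verify the slice condition inside $Lip_0(M)$. A slice of $B_{Lip_0(M)}$ may, after the standard reduction, be taken of the form $\{g\in B_{Lip_0(M)}:\langle g,m_{uv}\rangle>1-\alpha\}$, i.e. $g$ with slope $>1-\alpha$ on the pair $(u,v)$. Given in addition $f\in S_{Lip_0(M)}$ and $\varepsilon>0$, pick a pair $(p,q)$ on which $f$ has slope $>1-\varepsilon'$ of a fixed sign; by the length property together with the molecular identity, $(p,q)$ can be refined so that moreover $d(p,q)$ is as small as we please while the slope of $f$ on it stays near $1$ (an approximate midpoint $z$ of $(p,q)$ splits the slope: the two sub-slopes average, with weights $\tfrac{d(p,z)}{d(p,q)},\tfrac{d(z,q)}{d(p,q)}$, to the original slope, and each is $\le\|f\|=1$). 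One then builds a norm-one $g\in Lip_0(M)$ with slope near $1$ on $(u,v)$ and slope near $1$, with the sign of $f$, on $(p,q)$: a McShane-type function subordinated to two short ``geodesic'' chains joining $u$ to $v$ and $p$ to $q$ (which exist because $M$ is length), the smallness of $d(p,q)$ making the local modification needed to fix the slope on $(p,q)$ negligible for the norm. Then $f+g$ has slope near $2$ on $(p,q)$, so $\|f+g\|>2-\varepsilon$, while $g$ lies in the given slice.

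For ``$\mathcal F(M)$ Daugavet $\Rightarrow M$ length'' I would argue contrapositively, using that the Daugavet property forces every slice of $B_X$ to have diameter $2$, so that $B_X$ has no denting and in particular no strongly exposed points. If $M$ is not length there are $x\ne y$ and $\eta>0$ with $d(x,z)\vee d(z,y)\ge\tfrac12 d(x,y)+\eta$ for every $z$: along every approximate geodesic from $x$ to $y$ the parameter values near the midpoint are missing. I would use this ``gap'' to exhibit a slice of $B_{\mathcal F(M)}$ of diameter strictly less than $2$ --- locating a pair $(u,v)$ straddling the gap whose molecule $m_{uv}$ is (strongly) exposed by a Lipschitz function tailored to the edges of the gap, the absence of intermediate points supplying the quantitative control that makes any molecule near-maximizing that functional automatically near $m_{uv}$ in norm --- contradicting the Daugavet property of $\mathcal F(M)$.

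The hard part sits at both ends. In the length direction it is the concrete construction of a single norm-one Lipschitz function carrying near-extremal slopes of prescribed signs on two a priori unrelated pairs at once: the two chains furnished by the length property may run near each other and the basepoint constraint must be respected, which is cleanest to handle by first treating $f$ of molecular type and then passing to a general $f$ via the usual de la Vall\'ee-Poussin / norming argument. In the converse direction it is the passage from the purely metric fact ``the midpoint region of $(x,y)$ is absent'' to the functional-analytic one ``$B_{\mathcal F(M)}$ has a small slice'', that is, extracting a pair whose molecule is exposed \emph{and} strongly so, which is precisely where the size $\eta$ of the gap must be turned into a uniform norm estimate.
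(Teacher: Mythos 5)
First, note that the paper you are annotating does not prove this statement at all: it is quoted from \cite{Zpaper} and \cite{Zoriginal}, so your sketch can only be measured against the arguments in those references and against its own internal soundness. Your opening reduction ($Lip_0(M)$ Daugavet $\Rightarrow$ $\mathcal F(M)$ Daugavet via adjoints of rank-one operators) is fine, but the next step contains a genuine gap. The ``standard reduction'' of an arbitrary slice of $B_{Lip_0(M)}$ to one of the form $\{g:\langle g,m_{uv}\rangle>1-\alpha\}$ does not exist: those are \emph{weak*} slices, i.e.\ slices given by elements of the predual $\mathcal F(M)$, whereas the Kadets--Shvidkoy--Sirotkin--Werner criterion for the Daugavet property of $Lip_0(M)$ requires slices given by arbitrary functionals in $Lip_0(M)^*$. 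Verifying the criterion on weak*-slices is exactly the criterion for the Daugavet property of the \emph{predual} $\mathcal F(M)$; if such a reduction were available in general, the Daugavet property would always pass from a space to its dual, which is false (e.g.\ $C[0,1]$ versus $C[0,1]^*$). With that reduction removed, your cycle only gives ``length $\Rightarrow \mathcal F(M)$ Daugavet $\Rightarrow$ length'' plus ``$Lip_0(M)$ Daugavet $\Rightarrow \mathcal F(M)$ Daugavet'', and nothing ever produces the Daugavet property of $Lip_0(M)$, so the three-way equivalence is not established. The known proof of ``length $\Rightarrow Lip_0(M)$ Daugavet'' in \cite{Zpaper} attacks $Lip_0(M)$ directly, via the reformulation $B_X=\overline{\mathrm{conv}}\{g:\norm{f+g}\geq 2-\eps\}$ for every $f\in S_X$, using local modifications/averagings of an arbitrary $g$ rather than slices defined by molecules.

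The converse direction is also asserted rather than proved, and the route you choose is problematic in a more interesting way. From ``$M$ not length'' you propose to locate a pair whose molecule is \emph{strongly exposed}, and to get the small slice from that. But by Theorem \ref{thmequivalentZ}, the existence of a strongly exposed point in $B_{\mathcal F(M)}$ is equivalent to the failure of property $(Z)$; so ``not length $\Rightarrow$ strongly exposed molecule'' is precisely the contrapositive of this paper's Main Theorem, i.e.\ the result whose proof occupies the whole transfinite construction of Section 3. A one-sentence appeal to ``the absence of intermediate points supplying the quantitative control'' therefore hides (or presupposes) the hardest content available here, and certainly cannot be how \cite{Zpaper} argued, since the implication $(Z)\Rightarrow$ length was open at that time. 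What is actually needed, and what the cited works do, is weaker: from the midpoint gap around a pair $(x,y)$ one builds a concrete Lipschitz function and a concrete slice (or weak*-slice) of $B_{\mathcal F(M)}$ witnessing the failure of the slice form of the Daugavet equation against a suitable unit vector, with estimates coming from the disjointness of the two balls of radius $\tfrac12 d(x,y)+\eta$ around $x$ and $y$; no strongly exposed point is required. By contrast, the difficulty you flag in the length direction (building one norm-one function with prescribed near-extremal slopes on two pairs) is real but repairable by the standard device of modifying a function already in the slice only on a small ball around the short pair $(p,q)$, since this perturbs its values at $u,v$ by at most the diameter of that ball; the two unrepaired issues above are the ones that make the proposal fall short of a proof.
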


It can be easily proved that every length space has property $(Z)$. On the other hand, it was proved in \cite{Zoriginal} that both properties are equivalent in compact metric spaces.

The importance of Property $(Z)$  in the study of Lipschitz-free spaces can be inferred from the following Theorem:

\begin{thm}[\cite{Zpaper}]
\label{thmequivalentZ}	
Let $M$ be a complete metric space. Then the following assertions are equivalent:
\begin{enumerate}
	\item  $M$ has property $(Z)$;
	\item The unit ball of $\mathcal{F}(M)$ does not have strongly exposed points;
	\item The norm of $Lip_0(M)$ does not have any point of G\^{a}teaux differentiability;
	\item The norm of $Lip_0(M)$ does not have any point of Fr\'echet differentiability. 
\end{enumerate}
\end{thm}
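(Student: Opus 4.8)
The plan is to prove the cycle of implications $(1)\Rightarrow(3)\Rightarrow(4)\Rightarrow(2)\Rightarrow(1)$. Two of these links are immediate: $(3)\Rightarrow(4)$ holds because Fr\'echet differentiability implies G\^{a}teaux differentiability, so the absence of G\^{a}teaux points of $\norm{\cdot}_{Lip_0(M)}$ forces the absence of Fr\'echet points; and $(4)\Leftrightarrow(2)$ is \v{S}mulian's lemma for the predual $\mathcal{F}(M)$ of $Lip_0(M)$, namely that the norm of a dual space $Y^{*}$ is Fr\'echet differentiable at $f\in S_{Y^{*}}$ exactly when $f$ strongly exposes $B_{Y}$; thus "$\norm{\cdot}_{Lip_0(M)}$ has a Fr\'echet point" and "$B_{\mathcal{F}(M)}$ has a strongly exposed point" are literally the same statement. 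So the content is $(2)\Rightarrow(1)$ and $(1)\Rightarrow(3)$. I will use throughout that $B_{\mathcal{F}(M)}$ is the closed convex hull of the molecules $u_{pq}:=\tfrac{\delta_{p}-\delta_{q}}{d(p,q)}$ ($p\neq q$) and the splitting identity $d(p,q)\,u_{pq}=d(p,z)\,u_{pz}+d(z,q)\,u_{zq}$.

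For $(2)\Rightarrow(1)$ I would argue by contraposition. If $(Z)$ fails at a pair $(p,q)$ there is $\varepsilon_{0}>0$ with $d(p,z)+d(z,q)>d(p,q)+\varepsilon_{0}(d(p,z)\wedge d(z,q))$ for every $z\notin\{p,q\}$; equivalently, every sequence with $d(p,z_{n})+d(z_{n},q)\to d(p,q)$ has $d(p,z_{n})\wedge d(z_{n},q)\to 0$, i.e. the approximately geodesic points of the pair accumulate only at $p$ and $q$. Take $f\in S_{Lip_0(M)}$ with $f(p)-f(q)=d(p,q)$ --- a (normalised) McShane extension of $m\mapsto\min\{d(m,q),\,d(p,q)+d(p,m)\}$ does this --- so that $\langle u_{pq},f\rangle=1$. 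I would then show $f$ strongly exposes $u_{pq}$: if $u_{n}\in B_{\mathcal{F}(M)}$ and $\langle u_{n},f\rangle\to1$, write $u_{n}$ as a limit of convex combinations of molecules; since $f$ is $1$-Lipschitz and peaks on $(p,q)$, the molecules carrying almost all of the mass of $u_{n}$ have the form $u_{ab}$ with $f(a)-f(b)$ close to $d(a,b)$, and completeness of $M$ lets one pass to limits and conclude, using the accumulation statement above, that such molecules are norm-close to $u_{pq}$; hence $\norm{u_{n}-u_{pq}}\to0$. Thus $u_{pq}$ is strongly exposed, contradicting $(2)$.

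For $(1)\Rightarrow(3)$, suppose for contradiction that $\norm{\cdot}_{Lip_0(M)}$ is G\^{a}teaux differentiable at some $f\in S_{Lip_0(M)}$, so that its subdifferential $\partial\norm{\cdot}(f)\subseteq B_{Lip_0(M)^{*}}=B_{\mathcal{F}(M)^{**}}$ is a singleton $\{F\}$. Since $\norm{f}=\sup_{p\neq q}\langle u_{pq},f\rangle$, weak$^{*}$-compactness yields molecules $u_{p_{n}q_{n}}$ with $\langle u_{p_{n}q_{n}},f\rangle\to1$ whose weak$^{*}$-cluster points all equal $F$. Applying $(Z)$ to each $(p_{n},q_{n})$ with parameter $1/n$ produces $z_{n}\notin\{p_{n},q_{n}\}$ of small excess, and a direct estimate (using that $f$ is $1$-Lipschitz and $f(p_{n})-f(q_{n})$ is close to $d(p_{n},q_{n})$) shows that, after passing to a subsequence, one of the molecules $u_{p_{n}z_{n}}$, $u_{z_{n}q_{n}}$ --- call it $v_{n}$ --- also satisfies $\langle v_{n},f\rangle\to1$, so its weak$^{*}$-cluster points are likewise all $F$. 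Then $u_{p_{n}q_{n}}-v_{n}\to0$ weak$^{*}$, which I would contradict by pairing these differences with a Lipschitz function concentrated near the endpoint that $v_{n}$ omits. Equivalently: $(Z)$ forces the weak$^{*}$-closed convex hull of the molecules that asymptotically attain $\norm{f}$ to contain, besides the "long" molecules, their "split" pieces $u_{pz}$, $u_{zq}$, so it is never a single point and the norm is nowhere G\^{a}teaux differentiable.

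The step I expect to be the main obstacle is the analytic translation when $\norm{f}$ is \emph{not} attained on $B_{\mathcal{F}(M)}$: controlling the weak$^{*}$-cluster points of approximately maximising nets of molecules and locating, inside $M$, the limit points to which those molecules degenerate. This is exactly where completeness of $M$ is used --- it makes clean the dichotomy between "the almost maximising molecules collapse to a single molecule $u_{pq}$, whence $(p,q)$ fails $(Z)$ and $u_{pq}$ is strongly exposed" and "they genuinely spread out, whence $\norm{\cdot}_{Lip_0(M)}$ is nowhere G\^{a}teaux smooth". Verifying the norm-convergence claimed in $(2)\Rightarrow(1)$, and the failure of weak$^{*}$-convergence in $(1)\Rightarrow(3)$, will be the bulk of the proof.
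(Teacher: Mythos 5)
First, a point of comparison: the paper you are judged against does not prove this theorem at all --- it is quoted from \cite{Zpaper} (building on \cite{Zoriginal}) --- so your proposal has to stand against the argument known from those papers. Your skeleton is the right one: $(3)\Rightarrow(4)$ is trivial, $(4)\Leftrightarrow(2)$ is the classical \v{S}mulyan duality for dual norms (the nontrivial half being that Fr\'echet differentiability of the norm of $Lip_0(M)=\mathcal{F}(M)^*$ forces the derivative into the predual), and everything reduces to the two contrapositives you identify. But both of those contain genuine gaps. In $(2)\Rightarrow(1)$ the exposing functional you choose does not work: since $d(m,q)\le d(p,q)+d(p,m)$ always, your McShane extension is just $d(\cdot,q)$ up to an additive constant, and this functional can fail to expose $u_{pq}$ even when $(p,q)$ fails $(Z)$. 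Concretely, in the three-point space $\{p,q,z\}$ with all mutual distances equal to $2$, the pair $(p,q)$ fails $(Z)$, yet $\langle u_{zq},f\rangle=1=\langle u_{pq},f\rangle$ while $\norm{u_{pq}-u_{zq}}=1$, so $f$ does not even expose $u_{pq}$. The known proof uses a carefully balanced functional (such as $x\mapsto \frac{d(p,q)}{2}\cdot\frac{d(x,q)-d(x,p)}{d(x,p)+d(x,q)}$, as in \cite{Zoriginal} and \cite{Zpaper}) together with quantitative slice estimates in which the $\varepsilon_0$ witnessing the failure of $(Z)$ enters explicitly; your qualitative reformulation (``almost geodesic points accumulate only at $p$ and $q$'') is strictly weaker than the failure of $(Z)$, and in any case it does not by itself yield that small slices have small diameter, which is what strong exposedness requires.

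The gap in $(1)\Rightarrow(3)$ is more serious. Your plan is to split an almost-norming molecule $u_{p_nq_n}$ at a $(Z)$-witness $z_n$, note that the larger piece $v_n$ is also almost norming (that part is fine), deduce $u_{p_nq_n}-v_n\to 0$ weak$^*$ from the G\^{a}teaux--\v{S}mulyan criterion, and then contradict this by pairing with a bump function near the endpoint that $v_n$ omits. But there may be nothing to contradict: property $(Z)$ allows --- and in the hard cases provides only --- witnesses with $d(p_n,z_n)\wedge d(z_n,q_n)$ negligible compared with $d(p_n,q_n)$, and if, say, $d(p_n,z_n)/d(p_n,q_n)\to 0$, then $\norm{u_{p_nq_n}-u_{z_nq_n}}\le 2\,d(p_n,z_n)/d(p_n,q_n)\to 0$, so the difference tends to $0$ in norm and no test function can separate it from $0$. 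Whether $(Z)$-witnesses can be upgraded to genuine near-midpoints is precisely the content of the Main Theorem of the present paper, so this cannot be waved through. The step you yourself flag as the ``main obstacle'' --- that G\^{a}teaux smoothness at $f$ forces the derivative to be an honest molecule $u_{pq}\in\mathcal{F}(M)$ (norm attainment, using completeness) at a pair which must then fail $(Z)$ --- is the heart of the argument in \cite{Zpaper}, and it is missing here; as written, $(1)\Rightarrow(3)$ is not established.
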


For more relations among property $(Z)$, being length and the extremal structure of Lipschitz-free spaces we refer the reader to \cite{AliagaGuirao} and \cite{GLPPR18}.

These results motivated the following question posed in \cite[Question 1]{Zpaper}:

\begin{quest}
	If $M$ is a complete metric space with property $(Z)$, is $M$ length?
\end{quest}

In this paper we provide an affirmative answer to this question:

\begin{THM}
A complete metric space $M$ is length if and only if it has property $(Z)$.
\end{THM}

As a consequence, all conditions appearing in Theorems  \ref{thmequivalentlegnth} and \ref{thmequivalentZ} are equivalent. Moreover, \cite[Proposition 4.9]{Zpaper} asserts that if $M$ is a complete metric length space, then the unit ball of $\mathcal{F}(M)$ does not have preserved extreme points. Notice that every strongly exposed point is a preserved extreme point and that in Lipschitz-free spaces there might be preserved extreme points which are not strongly exposed \cite[Example 6.4]{GLPPR18}. Nevertheless, it follows from the aforementioned results and our Main Theorem that, for Lipschitz-free spaces, the absence of preserved extreme points in the unit ball is equivalent to the absence of strongly exposed points in the unit ball.

Furthermore, every Banach space with the Daugavet property has the strong diameter 2 property \cite[Theorem 4.4]{ALN} and a simple computation shows that the unit ball of a Banach space with the slice diameter 2 property cannot contain strongly exposed points (see \cite{BGLPRZ18} for definitions). Thus, the Main Theorem also provides a complete answer for the scalar case of \cite[Question 3.3]{BGLPRZ18}. Let us summarize in one theorem all the equivalences obtained:

\begin{thm}
Let $M$ be a complete metric space. Then the following assertions are equivalent:
	\begin{enumerate}
		\item  $M$ has property $(Z)$;
		\item $M$ is length;
		\item The unit ball of $\mathcal{F}(M)$ does not have strongly exposed points;
		\item The unit ball of $\mathcal{F}(M)$ does not have preserved extreme points;
		\item $\mathcal{F}(M)$ has the Daugavet property;
		\item $Lip_0(M)$ has the Daugavet property;
		\item The norm of $Lip_0(M)$ does not have any point of G\^{a}teaux differentiability;
		\item The norm of $Lip_0(M)$ does not have any point of Fr\'echet differentiability;
		\item $\mathcal{F}(M)$ has the slice diameter 2 property;
		\item $\mathcal{F}(M)$ has the diameter 2 property;
		\item $\mathcal{F}(M)$ has the strong diameter 2 property.
	\end{enumerate}
\end{thm}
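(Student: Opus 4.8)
The real content of this theorem is the equivalence $(1)\Leftrightarrow(2)$, which is precisely the Main Theorem; everything else is a matter of citing earlier work and threading the implications together. So the plan is to first establish $(1)\Leftrightarrow(2)$, and then close the loop among all eleven conditions using the quoted results. The nontrivial direction of $(1)\Leftrightarrow(2)$ is ``property $(Z)$ implies length'' (the converse being the easy observation already noted in the introduction, that a length space has property $(Z)$). For this I would argue as follows: fix $x\neq y$ in $M$ and $\delta>0$; the goal is to produce $z$ with $d(x,z)\vee d(z,y)\le d(x,y)/2+\delta$. Using property $(Z)$ repeatedly, I would build a sequence of points that progressively ``balances'' the two distances: starting from a near-midpoint candidate, each application of $(Z)$ with a small $\varepsilon$ inserts a point $z$ with $d(x,z)+d(z,y)$ close to $d(x,y)$, and crucially the error term $\varepsilon\cdot(d(x,z)\wedge d(z,y))$ is controlled by the \emph{smaller} of the two legs, which lets me push the configuration toward one where the two legs are nearly equal without the total length degrading. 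Iterating and using completeness of $M$ to extract a limit point (or to take a Cauchy sequence of ever-better near-midpoints) yields the desired $z$. This iteration/limit argument is where I expect essentially all the difficulty to lie: one must set up the recursion so that the accumulated error from the geometric-mean-type term stays summable and the ``imbalance'' of the two legs contracts, which requires a careful choice of the $\varepsilon$'s at each stage as a function of the current imbalance.

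Once $(1)\Leftrightarrow(2)$ is in hand, the remaining equivalences follow by assembling cited facts. Theorem~\ref{thmequivalentlegnth} gives $(2)\Leftrightarrow(5)\Leftrightarrow(6)$. Theorem~\ref{thmequivalentZ} gives $(1)\Leftrightarrow(3)\Leftrightarrow(7)\Leftrightarrow(8)$. For the preserved extreme points, \cite[Proposition 4.9]{Zpaper} shows $(2)\Rightarrow(4)$ (a complete length space has no preserved extreme points in the unit ball of $\mathcal{F}(M)$), while the elementary fact that every strongly exposed point is a preserved extreme point gives $(4)\Rightarrow(3)$; combined with $(1)\Leftrightarrow(2)\Leftrightarrow(3)$ this closes $(4)$ into the cycle. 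For the diameter-2 conditions: $(5)\Rightarrow(11)$ by \cite[Theorem 4.4]{ALN} (the Daugavet property implies the strong diameter 2 property), and the trivial implications $(11)\Rightarrow(10)\Rightarrow(9)$ hold for any Banach space; finally $(9)\Rightarrow(3)$ by the simple computation noted in the introduction (the unit ball of a space with the slice diameter 2 property has no strongly exposed points), which lands back at $(3)$. Chaining these, all of $(1)$--$(11)$ are equivalent.

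The one place where genuine mathematical effort is required is thus the implication (property $(Z)$) $\Rightarrow$ (length) for complete $M$; I would present that as the heart of the proof and treat the rest as a short bookkeeping paragraph of citations. A secondary point worth stating carefully is why $(Z)$ of a \emph{pair} of points can be upgraded: property $(Z)$ is a hypothesis about every pair, so during the iteration each newly inserted point again forms pairs with $x$ and with $y$ to which $(Z)$ applies, and this is exactly what feeds the recursion. I would also remark that completeness is used only in this one implication—the other ten equivalences are either purely Banach-space-theoretic or already proved in the literature under the standing completeness assumption.
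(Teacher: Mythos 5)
Your reduction of the theorem to the Main Theorem plus citations is exactly what the paper does: Theorem \ref{thmequivalentlegnth} gives (2)$\Leftrightarrow$(5)$\Leftrightarrow$(6), Theorem \ref{thmequivalentZ} gives (1)$\Leftrightarrow$(3)$\Leftrightarrow$(7)$\Leftrightarrow$(8), \cite[Proposition 4.9]{Zpaper} gives (2)$\Rightarrow$(4), the fact that strongly exposed points are preserved extreme points gives (4)$\Rightarrow$(3), \cite[Theorem 4.4]{ALN} gives (5)$\Rightarrow$(11), and (11)$\Rightarrow$(10)$\Rightarrow$(9)$\Rightarrow$(3) closes the cycle. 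This bookkeeping is correct and is all the paper itself adds at this point; the substance is delegated to the Main Theorem, i.e.\ to (1)$\Rightarrow$(2).

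The gap is in your sketch of that core implication. You propose a single iterative scheme: insert a point via property $(Z)$, claim the configuration can be pushed toward balanced legs, iterate, and pass to a limit by completeness. But property $(Z)$ gives no control over where the inserted point $z$ lands: it may be arbitrarily close to one endpoint, in which case the error term $\varepsilon\cdot(d(x,z)\wedge d(z,y))$ is tiny but no progress toward a midpoint is made, and no choice of $\varepsilon$ ``as a function of the current imbalance'' can force the imbalance to contract, because you do not get to choose $z$. This is precisely why the question was open outside the compact case. The paper's argument is structurally different: it builds points $x_t,y_t,z_t$ indexed by the full binary tree $2^{<\omega_1}$, splitting each pair $(x_t,y_t)$ into the two pairs $(x_t,z_t)$ and $(z_t,y_t)$, and controls the \emph{total} length of the chain formed by an entire level through the inductive inequalities (\ref{ine0})--(\ref{ine1}), taking limits at countable limit ordinals (this is where completeness enters, via Lemma \ref{summablesequence}), using a counting argument (Remark \ref{remarklargepartition}) to bound how often both new legs can exceed $\delta$, and invoking the nonexistence of strictly decreasing $\omega_1$-sequences of reals to find a countable level at which every gap is $<\delta$; the desired near-midpoint is then extracted as a lexicographical supremum along that level's chain. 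None of this --- the chain/tree structure, the transfinite recursion, the level-sum estimates --- is present in or recoverable from your sketch, so as written the implication (1)$\Rightarrow$(2) is unproven. If instead you intend simply to invoke the Main Theorem as a black box, then your assembly of the remaining equivalences is fine and coincides with the paper's.
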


\section{Preliminaries}

In this section we state transfinite versions of several basic facts about sequences in metric spaces.

\begin{defn}
If $\alpha<\omega_1$ is a limit ordinal and $\{x_\gamma : \gamma<\alpha\}$ is a transfinite sequence of points of a metric space $M$, we say that the sequence is Cauchy if for every $\varepsilon>0$ there exists $\beta<\alpha$ such that $d(x_\gamma,x_\delta)<\varepsilon$ whenever $\beta<\gamma<\delta<\alpha$. Similarly, we say that $\{x_\gamma : \gamma<\alpha\}$ converges to $x$ if for every $\varepsilon>0$ there exists $\beta<\alpha$ such that $d(x_\gamma,x)<\varepsilon$ whenever $\beta<\gamma<\alpha$.
\end{defn}

\begin{lem}
A sequence $\{x_\gamma : \gamma<\alpha\}$ is Cauchy (respectively, convergent to a point $x$) if and only if $\{x_{\alpha_n} : n\in\mathbb{N}\}$ is Cauchy (respectively, convergent to $x$) whenever $\alpha_1<\alpha_2<\cdots$ and $\sup_n\alpha_n = \alpha$.
\end{lem}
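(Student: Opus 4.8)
The plan is to prove both implications, with essentially all the content sitting in the "if" direction, which I would establish by contraposition. The one structural fact I will use throughout is that a countable limit ordinal $\alpha$ has cofinality $\omega$; so at the outset I fix, once and for all, a strictly increasing sequence of ordinals $\beta_1<\beta_2<\cdots$ with $\sup_n\beta_n=\alpha$.

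For the "only if" direction, suppose $\{x_\gamma:\gamma<\alpha\}$ is Cauchy and let $\alpha_1<\alpha_2<\cdots$ with $\sup_n\alpha_n=\alpha$. Given $\varepsilon>0$, pick $\beta<\alpha$ witnessing the transfinite Cauchy condition. Since $\sup_n\alpha_n=\alpha>\beta$ there is $N$ with $\alpha_N>\beta$, hence $\alpha_n>\beta$ for all $n\geq N$; then for $N\leq n<m$ we have $\beta<\alpha_n<\alpha_m<\alpha$, so $d(x_{\alpha_n},x_{\alpha_m})<\varepsilon$, and $\{x_{\alpha_n}:n\in\mathbb{N}\}$ is Cauchy. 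The convergence statement is proved identically, replacing $d(x_{\alpha_n},x_{\alpha_m})$ throughout by $d(x_{\alpha_n},x)$.

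For the "if" direction I would argue contrapositively: if $\{x_\gamma:\gamma<\alpha\}$ is not Cauchy, I produce a single admissible sequence $(\alpha_n)$ for which $\{x_{\alpha_n}:n\in\mathbb{N}\}$ is not Cauchy. Fix $\varepsilon>0$ such that for every $\beta<\alpha$ there are ordinals $\beta<\gamma<\delta<\alpha$ with $d(x_\gamma,x_\delta)\geq\varepsilon$. I then recursively define ordinals $\gamma_1<\delta_1<\gamma_2<\delta_2<\cdots$: setting $\delta_0=0$ and having defined $\delta_{n-1}$, apply the failure of the Cauchy condition with $\beta=\max(\delta_{n-1},\beta_n)$ to get $\gamma_n,\delta_n$ with $\max(\delta_{n-1},\beta_n)<\gamma_n<\delta_n<\alpha$ and $d(x_{\gamma_n},x_{\delta_n})\geq\varepsilon$. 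Interleaving, put $\alpha_{2n-1}=\gamma_n$ and $\alpha_{2n}=\delta_n$; this sequence is strictly increasing, and since $\gamma_n>\beta_n$ one gets $\sup_n\alpha_n=\sup_n\beta_n=\alpha$, so $(\alpha_n)$ is admissible. But $d(x_{\alpha_{2n-1}},x_{\alpha_{2n}})\geq\varepsilon$ for every $n$, so $\{x_{\alpha_n}:n\in\mathbb{N}\}$ is not Cauchy, as required. For the convergence version, assume $\{x_\gamma:\gamma<\alpha\}$ does not converge to $x$, fix $\varepsilon>0$ so that for each $\beta<\alpha$ there is $\beta<\gamma<\alpha$ with $d(x_\gamma,x)\geq\varepsilon$, and build $\gamma_1<\gamma_2<\cdots$ with $\gamma_n>\max(\gamma_{n-1},\beta_n)$ and $d(x_{\gamma_n},x)\geq\varepsilon$; then $\sup_n\gamma_n=\alpha$ while $\{x_{\gamma_n}:n\in\mathbb{N}\}$ does not converge to $x$.

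The only step that calls for a modicum of care is ensuring the sequence $(\alpha_n)$ we extract is genuinely cofinal in $\alpha$, and not merely strictly increasing; this is precisely why the "bad" pairs $(\gamma_n,\delta_n)$ must be interleaved with the fixed cofinal $\omega$-sequence $(\beta_n)$. Beyond that bookkeeping, no serious obstacle arises.
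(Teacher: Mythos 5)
Your proof is correct. Note that the paper states this lemma without any proof at all (it is treated as a routine fact whose only role is to justify that transfinite Cauchy sequences converge in complete spaces), so there is no argument of the authors to compare yours against. Your write-up handles the one genuinely delicate point correctly: in the contrapositive direction the extracted subsequence must be cofinal in $\alpha$, not merely increasing, and interleaving the ``bad'' indices with a fixed cofinal $\omega$-sequence $(\beta_n)$ (which exists because $\alpha<\omega_1$ has countable cofinality) settles this; the forward direction and the convergence variant are handled correctly as well.
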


As an immediate consequence of the above, in a complete metric space every transfinite Cauchy sequence is convergent.

\begin{lem}\label{summablesequence}
Let $\alpha$ be a limit ordinal and $\{x_\gamma : \gamma<\alpha\}$ be a sequence of points in a complete metric space $M$ such that $\{x_\gamma : \gamma<\beta\}$ converges to $x_\beta$ for every limit ordinal $\beta<\alpha$.  Suppose that $$\sum_{\gamma<\alpha}d(x_\gamma,x_{\gamma+1}) < +\infty.$$
Then $\{x_\gamma : \gamma<\alpha\}$ converges to a point $x\in M$ and  $$d(x_0,x) \leq \sum_{\gamma<\alpha}d(x_\gamma,x_{\gamma+1}).$$
\end{lem}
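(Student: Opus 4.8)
The plan is to prove convergence of the transfinite sequence by transfinite induction on $\alpha$, using the preceding lemma to reduce limit-ordinal convergence to convergence of cofinal $\omega$-subsequences, where the ordinary summability criterion applies. First I would set $S = \sum_{\gamma<\alpha} d(x_\gamma, x_{\gamma+1}) < +\infty$, and note that this already gives, for every pair $\beta < \beta' < \alpha$, the ``triangle-type'' bound $d(x_\beta, x_{\beta'}) \le \sum_{\beta \le \gamma < \beta'} d(x_\gamma, x_{\gamma+1})$; this auxiliary estimate should itself be established by transfinite induction on $\beta'$ (the successor step is the triangle inequality, and the limit step uses the hypothesis that $\{x_\gamma : \gamma < \beta'\}$ converges to $x_{\beta'}$ together with the inductive bound and the tail-summability of the series). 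In particular, taking $\beta = 0$ and letting $\beta' \to \alpha$ along a cofinal sequence will yield the desired inequality $d(x_0, x) \le S$ once convergence is known.

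For the convergence itself, fix an increasing sequence $\alpha_1 < \alpha_2 < \cdots$ with $\sup_n \alpha_n = \alpha$. By the auxiliary estimate, $d(x_{\alpha_m}, x_{\alpha_n}) \le \sum_{\alpha_m \le \gamma < \alpha_n} d(x_\gamma, x_{\gamma+1})$ for $m < n$, and since the full series over $\gamma < \alpha$ converges, these tails tend to $0$; hence $\{x_{\alpha_n} : n \in \mathbb{N}\}$ is an ordinary Cauchy sequence, so it converges to some $x \in M$ by completeness. By the lemma immediately preceding this one, $\{x_\gamma : \gamma < \alpha\}$ then converges to $x$. Finally, $d(x_0, x) = \lim_n d(x_0, x_{\alpha_n}) \le \lim_n \sum_{\gamma < \alpha_n} d(x_\gamma, x_{\gamma+1}) = S$, which is the second assertion.

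I expect the main obstacle to be the careful verification of the auxiliary triangle-type estimate $d(x_\beta, x_{\beta'}) \le \sum_{\beta \le \gamma < \beta'} d(x_\gamma, x_{\gamma+1})$ at limit stages $\beta'$: one must pass to a cofinal sequence $\delta_k \nearrow \beta'$, apply the inductive hypothesis on each $d(x_\beta, x_{\delta_k})$, use convergence $x_{\delta_k} \to x_{\beta'}$ to replace $x_{\delta_k}$ by $x_{\beta'}$ in the limit, and check that the partial sums $\sum_{\beta \le \gamma < \delta_k} d(x_\gamma, x_{\gamma+1})$ increase to $\sum_{\beta \le \gamma < \beta'} d(x_\gamma, x_{\gamma+1})$ — this last point being exactly the definition of the (possibly transfinite) sum as the supremum of finite partial sums, which is legitimate since all terms are nonnegative. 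Everything else is a routine combination of the triangle inequality, completeness, and the preceding lemma.
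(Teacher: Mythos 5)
Your proposal is correct and is essentially the paper's proof: the heart of both arguments is the triangle-type segment estimate $d(x_\beta,x_{\beta'})\le\sum_{\beta\le\gamma<\beta'}d(x_\gamma,x_{\gamma+1})$, established inductively with limit stages handled by the hypothesis that $\{x_\gamma:\gamma<\beta'\}$ converges to $x_{\beta'}$, after which summability of the tails gives Cauchyness of cofinal subsequences (indeed of the whole transfinite sequence), and completeness together with the preceding lemma yield the limit $x$ and the bound $d(x_0,x)\le\sum_{\gamma<\alpha}d(x_\gamma,x_{\gamma+1})$. The only organizational difference is that you induct on the upper endpoint of the pairwise estimate within the fixed sequence, whereas the paper inducts on $\alpha$ itself and reduces to the $\omega$ case by grouping the indices into blocks $[\alpha_{n-1},\alpha_n)$ and applying the inductive hypothesis to each block.
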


\begin{proof}
We prove it by induction on $\alpha$. For $\alpha=\omega$ this is a well-known fact. Now fix a limit ordinal $\alpha<\omega_1$. We take a sequence of ordinals $0=\alpha_0 <\alpha_1<\alpha_2<\cdots$ such that $\alpha = \sup_n \alpha_n$, and we want to prove that this sequence is Cauchy and its limit satisfies the above inequality. We can enlarge our sequence by adding, between $\alpha_{n-1}$ and $\alpha_n$, the least ordinal $\beta$ such that the interval $[\beta,\alpha_n]$ is finite (if such $\beta$ is in fact between $\alpha_{n-1}$ and $\alpha_n$). Thus, we can suppose that for each $n\geq 1$, either $\alpha_n$ is a limit ordinal, or there are only finitely many ordinals between $\alpha_{n-1}$ and $\alpha_n$. Using either the inductive hypothesis or the triangle inequality, we get that
$$d(x_{\alpha_{n-1}},x_{\alpha_n}) \leq \sum_{\alpha_{n-1}\leq\gamma<\alpha_n}d(x_\gamma,x_{\gamma+1})$$
Thus, the case $\alpha=\omega$ of the lemma applied to the sequence $\{x_{\alpha_n} : n<\omega\}$ gives the desired result.
\end{proof}

For $t,s\in 2^\alpha$, the lexicographical order $t\prec s$ means that $t(\beta)<s(\beta)$ where $\beta$ is the minimal ordinal for which $t(\beta)\neq s(\beta)$. The lexicographical order of $2^\alpha$ is complete, in the sense that every subset $A\subset 2^\alpha$ has a supremum. Such a supremum $s$ can be defined inductively by declaring $s(\gamma) = 1$ if there is some $t\in A$ such that $t|_\gamma = s|_\gamma$ and $t(\gamma)=1$, while $s(\gamma)=0$ if no such $t$ exists.

\section{The Main Theorem}

In order to prove the Main Theorem we only need to check that $M$ is a length space whenever it is a complete metric space with property $(Z)$.

\begin{proof}[Proof of the Main Theorem.]
 Let $M$ be a complete space with property $(Z)$. We fix $x,y\in M$ and two numbers $0<\delta,\eta<1$. We will find $z\in M$ such that  
 $$d(x,z) \vee d(z,y) \leq \frac{d(x,y)}{2(1-\eta)} + \delta.$$ 
 Since $\delta$ and $\eta$ are arbitrarily close to zero, it follows that $M$ is length.
 
 We will define points $x_t,y_t,z_t\in M$ for every $t\in 2^{<\omega_1}$, that is for every function $t:\alpha\To 2=\{0,1\}$ with $\alpha<\omega_1$. Given $x_t$ and $y_t$, then $z_t$ will be defined as follows:
 
 \begin{itemize}
 	\item If $d(x_t,y_t)<\delta$, then $z_t = x_t$
 	\item If $d(x_t,y_t)\geq \delta$, then by using property $(Z)$, we pick $z_t\in M\setminus\{x_t,y_t\}$ such that
 	\begin{eqnarray} \label{ineZ} d(x_t,z_t) + d(z_t,y_t) &\leq& d(x_t,y_t) +  \varepsilon\cdot  (d(x_t,z_t)\wedge d(z_t,y_t))\end{eqnarray}
 	where $\varepsilon = \frac{1}{2}\wedge \frac{\eta}{2} \wedge \frac{\delta\eta}{4 d(x,y)}$. Note that the above inequality also holds when $d(x_t,y_t)<\delta$.
 \end{itemize}

One observation is that, when passing to $z_t$, distances get always reduced:
\begin{eqnarray}
\label{reduced} d(x_t,z_t)\vee d(z_t,y_t) &\leq& d(x_t,y_t), \text{ and inequality is strict if } d(x_t,y_t)\geq \delta.
\end{eqnarray}
This is a direct consequence of (\ref{ineZ}) and the fact that $\varepsilon<1$. A more crucial fact is that, if the distances to the new point $z_t$ are both large, then these distances have to decrease in a fixed amount:

\begin{eqnarray}
\label{fixedamount} d(x_t,z_t)\wedge d(z_t,y_t) \geq \delta &\Longrightarrow& d(x_t,z_t)\vee d(z_t,y_t) \leq d(x_t,y_t) - \delta/2.
\end{eqnarray}
This is because 
\begin{eqnarray*}
	d(x_t,y_t)-d(x_t,z_t) &\geq& d(y_t,z_t) - \varepsilon \cdot  (d(x_t,z_t)\wedge d(z_t,y_t)) \\
	& \geq & (1-\varepsilon)\cdot  (d(x_t,z_t)\wedge d(z_t,y_t)) \geq (1-\varepsilon)\delta \geq \delta/2
\end{eqnarray*}

and we get a similar inequality if we exchange $d(x_t,z_t)$ and $d(z_t,y_t)$.
 
The definition of $x_t$ and $y_t$ is done by induction on the ordinal $dom(t)<\omega_1$, the domain of $t$. Along the construction, we assume inductively that the following inequalities hold for each $\alpha<\omega_1$:

\begin{equation}
\label{ine0} d(x_t,y_t) \leq d(x_{t|_\gamma},y_{t|_\gamma})  \text{ when }\gamma<\alpha=dom(t).\end{equation}
\begin{equation}
\label{ine3}
\sum_{dom(t)=\alpha} d(x_t,y_t)  \leq  d(x,y) +  \varepsilon\sum_{dom(t)<\alpha}d(x_t,z_t)\wedge d(z_t,y_t).\end{equation}
\begin{equation}
\label{ine27}  \text{For every }t,s\in 2^\alpha \text{ with }t\prec s  \begin{cases} d(x_t,y_s) &\leq \displaystyle\sum_{r\in 2^\alpha, t\preceq r\preceq s} d(x_r,y_r),\\[20pt]
d(x_t,x_s) &\leq \displaystyle\sum_{r\in 2^\alpha, t\preceq r\prec s} d(x_r,y_r),\\[20pt]
d(y_t,y_s) &\leq \displaystyle\sum_{r\in 2^\alpha, t\prec r\preceq s} d(x_r,y_r),\\[20pt]
d(y_t,x_s) &\leq \displaystyle\sum_{r\in 2^\alpha, t\prec r\prec s} d(x_r,y_r).\end{cases}
\end{equation}
\begin{equation}
\label{ine2} \varepsilon\sum_{dom(t)<\alpha}d(x_t,z_t)\wedge d(z_t,y_t)
\leq  \eta \sum_{dom(t)=\alpha} d(x_t,y_t).\end{equation}  
 \begin{equation}   
 \label{ine1} \sum_{dom(t)=\alpha} d(x_t,y_t)  \leq  \frac{d(x,y)}{1-\eta}.\end{equation} 
Notice that it is implicit in these inductive assumptions that all sums displayed have finite value, as it follows from (\ref{ine1}) and (\ref{ine2}). Let us proceed to the inductive construction. Set $x_\emptyset=x$ and $y_\emptyset=y$. First, consider $\alpha = \beta+1$ a successor ordinal, and $t$ with $dom(t) = \alpha$. If $t(\beta) = 0$, define $x_t = x_{t|_\beta}$ and $y_t = z_{t|_\beta}$. If $t(\beta) = 1$, define $x_t = z_{t|_\beta}$ and $y_t = y_{t|_\beta}$. If $dom(t)=\alpha<\omega_1$ is a limit ordinal, then we claim that the transfinite sequences $\{x_{t|_\gamma} : \gamma<\alpha\}$ and $\{y_{t|_\gamma} : \gamma<\alpha\}$ will be convergent in $M$, and we will define $x_t$ and $y_t$ as their respective limits. We must prove the convergence of these sequences, but before that let us assume inductively that this was the case for all limit ordinals $\beta<\alpha$ and let us make some observations that help us understand how the construction works. 

\begin{rem}\label{remarkcollapse} If $dom(t)=\beta$ and $d(x_t,y_t)< \delta$, because of the definition of $z_t$ in this case, the construction stabilizes after $t$ in the following sense: For every $s$ with $dom(s)=\gamma>\beta$ and $s|_\beta = t$, then $x_s=y_s=x_t$ unless $s(\gamma)=1$ for all $\gamma\geq \beta$, in which case $x_s = x_t$ and $y_s=y_t$. That is, if we add only ones after $t$ then we keep the same pair $x_t,y_t$, and if we add any 0 we get the trivial pair $(x_t,x_t)$.
\end{rem}

\begin{rem}\label{remarklargepartition}
Given a node $s$ with $dom(s)=\beta$	
$$ \left|\set{\gamma<\beta : d(x_{s|_\gamma},z_{s|_\gamma})\wedge d(z_{s|\gamma},y_{s|_\gamma})\geq \delta}\right| \leq \frac{2 d(x,y)}{\delta}$$
\end{rem}

This means that it cannot happen too often that a pair $(x_t,y_t)$ at large distance splits into two pairs at large distance. The proof follows directly from (\ref{fixedamount}) and (\ref{ine0}), since we have 
$$ 0 \leq d(x_s,y_s) \leq d(x,y)- \frac{\delta}{2} \left|\set{\gamma<\beta : d(x_{s|_\gamma},z_{s|_\gamma})\wedge d(z_{s|\gamma},y_{s|_\gamma})\geq \delta}\right|.$$

\bigskip

Now we prove that  $\{x_{t|_\gamma} : \gamma<\alpha\}$ is a transfinite convergent sequence. The case of $\{y_{t|_\gamma} : \gamma<\alpha\}$ is analogous. We apply Lemma~\ref{summablesequence}. The fact that $\{x_{t|_\gamma} : \gamma<\beta\}$ converges to $x_{t|_\beta}$ when $\beta<\alpha$ is the inductive hypothesis on this definition. If $d(x_{t|_\gamma},y_{t|_\gamma})<\delta$ for some $\gamma<\alpha$, then the sequence $\{x_{t|_\gamma} : \gamma<\alpha\}$ becomes eventually constant and we would be done. So we suppose that $d(x_{t|_\gamma},y_{t|_\gamma})\geq \delta$ for all $\gamma<\alpha$.
It follows from Remark~\ref{remarklargepartition} and from (\ref{reduced}) that there are only finitely many $\gamma<\alpha$ such that $d(x_{t|_{\gamma}},x_{t|_{\gamma+1}})\geq \delta$ (if $d(x_{t|_{\gamma}},x_{t|_{\gamma+1}})\geq \delta$ then $x_{t|_{\gamma+1}}=z_{t|_\gamma}$ and $d(x_{t|_{\beta}},x_{t|_{\beta+1}}) \leq d(z_{t|_{\gamma}},y_{t|_{\gamma}})$ for every $\beta > \gamma$). Thus, to check the hypothesis of Lemma~\ref{summablesequence} it is enough to check that \begin{equation}\label{cauchyseries} \sum_{\gamma<\alpha} \{ d(x_{t|_{\gamma}},x_{t|_{\gamma+1}}) :  0<d(x_{t|_{\gamma}},x_{t|_{\gamma+1}})<\delta\} < +\infty.\end{equation}
Take $\gamma_1<\gamma_2<\cdots<\gamma_m<\alpha$ indices in the above series. Notice that since $ 0<d(x_{t|_{\gamma_k}},x_{t|_{\gamma_k+1}})$ we must have $t(\gamma_k) = 1$. Define $t_k\colon\gamma_m+1\To 2$ by $t_k(\beta) = t(\beta)$ for $\beta<\gamma_k$, $t_k(\gamma_k)=0$ and $t_k(\beta)=1$ if $\beta\geq \gamma_k$. Since $d(x_{t|_{\gamma_k}},x_{t|_{\gamma_k+1}})<\delta$, by Remark~\ref{remarkcollapse}, we will have $x_{t_k} = x_{t|_{\gamma_k}}$, $y_{t_k} = x_{t|_{\gamma_k+1}}$. Thus, using the inductive hypothesis (\ref{ine1}),
$$\sum_{k=1}^m d(x_{t|_{\gamma_k}},x_{t|_{\gamma_k+1}}) = \sum_{k=1}^m d(x_{t_k},y_{t_k}) \leq \sum_{dom(t)=\gamma_m+1} d(x_{t},y_{t}) \leq \frac{d(x,y)}{1-\eta}.$$
Since all finite sums of the series (\ref{cauchyseries}) are bounded by a fixed finite number, the infinite series is finite. This finishes the construction in the limit step.

We still have to check all the inductive assumptions.

Proof of (\ref{ine0}): This is a direct consequence of (\ref{reduced}) and the inductive definitions of $x_t$ and $y_t$. 

Proof of (\ref{ine3}) and proof that all sums appearing in the inductive hypotheses are finite: In the successor case $\alpha = \beta+1$,

\begin{eqnarray*}
\sum_{dom(t)=\alpha} d(x_t,y_t) & = & \sum_{dom(t)=\beta} d(x_t,z_t) + d(z_t,y_t)\\ &\leq& \sum_{dom(t)=\beta} d(x_t,y_t) +  \varepsilon\cdot  (d(x_t,z_t)\wedge d(z_t,y_t))\\
&\leq& d(x,y) + \varepsilon\sum_{dom(t)<\beta}d(x_t,z_t)\wedge d(z_t,y_t) + \varepsilon\sum_{dom(t)=\beta}d(x_t,z_t)\wedge d(z_t,y_t).
\end{eqnarray*}

Notice that the last sum is in fact finite by inductive hypothesis and (for the last summand) inequality (\ref{reduced}). Now we prove (\ref{ine3}) when $\alpha$ is a limit ordinal. First we notice that, by the inductive hypothesis combining (\ref{ine1}) and (\ref{ine2}),
$$\sum_{dom(t)<\alpha}d(x_t,z_t)\wedge d(z_t,y_t) = \sup_{\beta<\alpha} \sum_{dom(t)<\beta}d(x_t,z_t)\wedge d(z_t,y_t) \leq \frac{\eta d(x,y)}{\varepsilon(1-\eta)}<+\infty$$

It is enough to fix $t_1,\ldots,t_m$ with $dom(t)=\alpha$ and $\xi>0$, and check that
$$\sum_{k=1}^m d(x_{t_k},y_{t_k})  \leq  d(x,y) +  \varepsilon\sum_{dom(t)<\alpha}d(x_t,z_t)\wedge d(z_t,y_t) + \xi.$$
Since in the limit case we construct our points as limits, we can find $\beta<\alpha$ such that $d(x_{t_k},x_{t_k|_\beta})+d(y_{t_k},y_{t_k|_\beta})<\xi/m$ for all $k$ and $t_k|_\beta \neq t_r|_\beta$ whenever $k\neq r$. Then, 
\begin{eqnarray*}
	\sum_{k=1}^m d(x_{t_k},y_{t_k})  &\leq& 
\sum_{k=1}^m d(x_{t_k|_\beta},y_{t_k|_\beta}) + \xi \leq \sum_{dom(t)=\beta} d(x_{t},y_{t}) + \xi \\ &\leq& d(x,y) +  \varepsilon\sum_{dom(t)<\beta}d(x_t,z_t)\wedge d(z_t,y_t) + \xi\\ &\leq& d(x,y) +  \varepsilon\sum_{dom(t)<\alpha}d(x_t,z_t)\wedge d(z_t,y_t) + \xi.
\end{eqnarray*}

Proof of (\ref{ine27}): Notice that all four inequalities in (\ref{ine27}) follow from the last one together with
\begin{eqnarray*}
	d(x_t,y_s) &\leq& d(x_t,y_t) + d(y_t,x_s) + d(x_s,y_s),\\ d(x_t,x_s) &\leq& d(x_t,y_t)+d(y_t,x_s),\\ d(y_t,y_s) &\leq& d(y_t,x_s)+d(x_s,y_s).
\end{eqnarray*}

So what we have to prove is that the last inequality holds for $\alpha$ assuming that all inequalities hold for ordinals less than $\alpha$. First we consider the successor case $\alpha = \beta+1$. For $r\in 2^\beta$ and $i\in \{0,1\}$ let $r^\frown i\in 2^\alpha$ be the element that coincides with $r$ below $\beta$ while $r(\beta) = i$. Given $t\prec s$ in $2^\alpha$, they must be of the form $t= u^\frown i$ and $s=v^\frown j$ for some $u,v\in 2^\beta$ with $u\preceq v$. If $u=v$, then $t=u^\frown 0$, $s=u^\frown 1$ and $d(y_t,x_s) = d(z_u,z_u)=0$, so we are done. We suppose that $u\prec v$, and applying the inductive hypothesis,
\begin{eqnarray*}
d(y_{u},x_{v}) &\leq& \sum_{r\in 2^\beta, u \prec r \prec v} d(x_r,y_r)\\
 &=& \sum_{r\in 2^\beta, u \prec r \prec v} d(x_{r^\frown 0},y_{r^\frown 1})\\ \text{(since }y_{r^\frown 0} = z_r = x_{r^\frown 1}\text{)} 
 &\leq & \sum_{r\in 2^\beta, u \prec r \prec v} d(x_{r^\frown 0},y_{r^\frown 0}) + d(x_{r^\frown 1},y_{r^\frown 1})\\ 
 &=&  \sum_{r\in 2^\alpha, u^\frown 1 \prec r \prec v^\frown 0} d(x_r,y_r).
\end{eqnarray*} 

Therefore,
\begin{eqnarray*}d(y_t,x_s) &\leq& d(y_t,y_u) + d(y_u,x_v) + d(x_v,x_s)\\ &\leq& d(y_t,y_u) + \sum_{r\in 2^\alpha, u^\frown 1 \prec r \prec v^\frown 0} d(x_r,y_r) + d(x_v,x_s).\end{eqnarray*}
The desired inequality now follows from a distinction of cases. If $t=u^\frown 0$ then $y_t=x_{u^\frown 1}$ and $y_u = y_{u^\frown 1}$, if $t=u^\frown 1$ then $y_t=y_u$, if $s=v^\frown 0$ then $x_v=x_s$ and if $s=v^\frown 1$ then $x_v = x_{v^\frown 0}$ and $x_s = y_{v^\frown 0}$.

 Now we consider the case when $\alpha$ is a limit ordinal. Again, we fix $t\prec s$ in $2^\alpha$ and we want to prove the last inequality in (\ref{ine27}). For each $\beta\leq \alpha$ we consider
\begin{equation}\label{definitionofBbeta}
	B_\beta = \set{r\in 2^\beta :  d(x_r,y_r)\geq \delta}.\\ 
\end{equation}
Since we already proved that $\sum_{r\in 2^\beta}d(x_r,y_r)<+\infty$ for all $\beta$, the sets $B_\beta$ are all finite. Since we already proved that $\sum_{dom(r)<\alpha}d(x_r,z_r)\wedge d(z_r,y_r)<+\infty$, there are only finitely many $r$ with $dom(r)<\alpha$ and $d(x_r,z_r)\wedge d(z_r,y_r)\geq \delta$. Therefore we can find $\beta_0<\alpha$ such that $d(x_r,z_r)\wedge d(z_r,y_r)<\delta$ whenever $\beta_0<dom(r)<\alpha$. This implies that $|B_\beta|\leq |B_{\gamma}|$ whenever $\beta_0<\gamma<\beta<\alpha$. Therefore we can find $\beta_1>\beta_0$ below $\alpha$ such that $|B_\beta| = |B_\gamma|$ whenever $\beta_1<\gamma<\beta<\alpha$. In fact, we must have that $B_\gamma = \{r|_\gamma : r\in B_\beta\}$ whenever $\beta_1<\gamma<\beta\leq \alpha$. Find $\beta_2>\beta_1$ such that $r|_{\beta_2} \neq r'|_{\beta_2}$ whenever $r\neq r'$ and $r,r'\in B_\alpha\cup\{t,s\}$. This implies that, $t\prec r \prec s$ if and only if $t|_\beta\prec r|_\beta\prec s|_\beta$ whenever $\beta\geq \beta_2$ and $r\in B_\alpha$. So for $\beta>\beta_2$ we have
\begin{eqnarray*}
C_\beta^+ &=& \{r\in 2^\beta : t|_\beta\prec r \prec s|_\beta \text{ and } d(x_r,y_r)\geq \delta\}\\
 &=& \{r|_\beta : r\in C_\alpha^+\}.
\end{eqnarray*}
We also consider
\begin{eqnarray*}
C_\beta^- &=& \{r\in 2^\beta : t|_\beta\prec r \prec s|_\beta \text{ and } d(x_r,y_r)<\delta\}.
\end{eqnarray*}
We fix a positive a number $c>0$ and take $\beta_3>\beta_2$ below $\alpha$ such that
$$\sum_{r\in C_\alpha^+} d(x_{r|_\beta},y_{r|_\beta}) \leq c + \sum_{r\in C_\alpha^+} d(x_r,y_r)$$
whenever $\beta_3<\beta<\alpha$. For $r\in 2^\beta$, let $r^1\in 2^\alpha$ be the function that coincides with $r$ on $\beta$ and takes value $1$ from $\beta$ on. For every $\beta>\beta_3$ we can apply the inductive hypothesis about (\ref{ine27}) and we get that
\begin{eqnarray*}
d(y_{t|_\beta},x_{s|_\beta}) &\leq & \sum_{r\in 2^\beta, t|_\beta \prec r \prec s|_\beta} d(x_r,y_r)\\ &=& \sum_{r\in C_\beta^+}d(x_r,y_r) +  \sum_{r\in C_\beta^-}d(x_r,y_r)\\
&=&  \sum_{r\in C_\alpha^+}d(x_{r|_\beta},y_{r|_\beta}) + \sum_{r\in C_\beta^-}d(x_{r^1},y_{r^1})\\
&\leq & c + \sum_{r\in C_\alpha^+}d(x_{r},y_{r}) + \sum_{r\in C_\alpha^-}d(x_{r},y_{r}) \\ &=& c + \sum_{r\in 2^\alpha, t\prec r \prec s}d(x_{r},y_{r}).
\end{eqnarray*}

Since this holds for every $c>0$ and every $\beta$ with $\beta_3<\beta<\alpha$, the last inequality of (\ref{ine27}) follows.

Proof of (\ref{ine2}): We divide the nonzero summands of the left-hand side into two parts:
\begin{eqnarray*}
 A^- &=& \{t :\ dom(t)<\alpha, \ 0< d(x_t,z_t)\wedge d(z_t,y_t)<\delta\}\\
 A^+ &=& \{t :\ dom(t)<\alpha, \ d(x_t,z_t)\wedge d(z_t,y_t)\geq \delta\}
\end{eqnarray*}

For every $t\in A^-$, consider $\tilde{t}\in 2^\alpha$ given by $\tilde{t}(\gamma) = t(\gamma)$ for $\gamma\in dom(t)$, $\tilde{t}(\gamma)=1$ for $\gamma> dom(t)$ and
$$\tilde{t}(dom(t)) = \begin{cases} 0 & \text{ if } d(x_t,z_t)\wedge d(z_t,y_t) = d(x_t,z_t)\\  1 & \text{ if } d(x_t,z_t)\wedge d(z_t,y_t) = d(z_t,y_t) \end{cases} $$
By Remark~\ref{remarkcollapse}, the construction will stabilize and we will have that
$$d(x_{\tilde{t}},y_{\tilde{t}}) = d(x_t,z_t)\wedge d(z_t,y_t).$$
Moreover, we notice that $\tilde{t}\neq \tilde{s}$ whenever $t\neq s$.  Otherwise, we would have $t = \tilde{t}|_\gamma$ and $s=\tilde{s}|_\beta$ for some ordinals with say $\beta>\gamma$, and then
$$ d(x_s,y_s) = d(x_{\tilde{t}|_\beta},y_{\tilde{t}|_\beta}) \leq d(x_{\tilde{t}|_{\gamma+1}},y_{\tilde{t}|_{\gamma+1}}) = d(x_t,z_t)\wedge d(z_t,y_t)<\delta.$$
By Remark \ref{remarkcollapse} this implies that $d(x_s,z_s)\wedge d(z_s,y_s)= 0$, a contradiction with $s\in A^-$. We conclude that
$$\sum_{t\in A^-}d(x_t,z_t)\wedge d(z_t,y_t) = \sum_{t\in A^-}d(x_{\tilde{t}},y_{\tilde{t}}) \leq \sum_{dom(t)=\alpha}d(x_t,y_t), $$
and since we took $\varepsilon<\eta/2$ we get that 
\begin{equation}\label{Aminus} \varepsilon \sum_{t\in A^-}d(x_t,z_t)\wedge d(z_t,y_t) \leq \frac{\eta}{2} \sum_{dom(t)=\alpha}d(x_t,y_t). \end{equation}

Now we deal with $A^+$. For $t\in A^+$, let $t^0,t^1\in 2^\alpha$ be given by $t^i(\gamma) = t(\gamma)$ for $\gamma\in dom(t)$, and $t^i(\gamma)=i$ for $\gamma\geq dom(t)$. Notice that $x_t = x_{t^0}$ and $y_t = y_{t^1}$.

\begin{eqnarray*}
\sum_{t\in A^+}d(x_t,z_t)\wedge d(z_t,y_t)  &\leq& \sum_{t\in A^+} d(x_t,y_t)  = \sum_{t\in A^+} d(x_{t^0},y_{t^1})\\
\ \text{ by already proven }(\ref{ine27}) \ &\leq& \sum_{t\in A^+} \sum_{t^0\preceq r \preceq t^1, r\in 2^\alpha} d(x_{r},y_{r}) \\
& =& \sum_{r\in 2^\alpha} d(x_r,y_r)\cdot \left|\set{ t\in A^+ : t^0 \preceq r \preceq t^1}\right|
\\
& =& \sum_{r\in 2^\alpha} d(x_r,y_r)\cdot \left|\set{ t\in A^+ : \exists \beta<\alpha : t = r|_\beta }\right| \\
\text{ by Remark } \ref{remarklargepartition} &\leq& 
\sum_{r\in 2^\alpha} \frac{2 d(x,y)}{\delta}d(x_r,y_r) 
\\
(\text{since we chose }\varepsilon < \frac{\delta\eta}{4d(x,y)})  &\leq& 
\frac{\eta}{2\varepsilon}\sum_{r\in 2^\alpha} d(x_r,y_r).
\end{eqnarray*}

This together with (\ref{Aminus}) finishes the proof of (\ref{ine2}).

Proof of (\ref{ine1}): For this, we just apply first (\ref{ine3}) and then (\ref{ine2}) :
\begin{eqnarray*}\sum_{dom(t)=\alpha} d(x_t,y_t)  &\leq&  d(x,y) +  \varepsilon\sum_{dom(t)<\alpha}d(x_t,z_t)\wedge d(z_t,y_t)\\
 &\leq&  d(x,y) + \eta \sum_{dom(t)=\alpha} d(x_t,y_t),
\end{eqnarray*}
from which (\ref{ine1}) follows.

All the inductive hypotheses have been proven. 

Claim: There exists $\xi<\omega_1$ such that $d(x_t,y_t) < \delta$ for all $t\in 2^\xi$.

Proof of the claim: We consider again the finite sets $B_\beta$ introduced in (\ref{definitionofBbeta}). The reasoning in the paragraph below (\ref{definitionofBbeta}) is valid for $\alpha=\omega_1$, and it tells us that we can find  $\beta_1<\omega_1$ such that $|B_\gamma| = |B_\beta|$ and $B_\gamma =  \{r|_\gamma : r\in B_\beta\}$ whenever $\beta_1<\gamma<\beta<\omega_1$. This means that there are $r_1,\ldots,r_n\in 2^{\omega_1}$ such that $B_\gamma =  \{{r_i}|_\gamma : i=1,\ldots,n\}$ whenever $\beta_1<\gamma<\omega_1$. By (\ref{reduced}), we have that $d(x_{{r_i}|_\gamma},y_{{r_i}|_\gamma}) > d(x_{{r_i}|_\beta},y_{{r_i}|_\beta})$ whenever $\beta_1<\gamma<\beta<\omega_1$. There are no strictly decreasing $\omega_1$-sequences of real numbers, so the conclusion is that $n=0$, so $B_\gamma = \emptyset$ for $\gamma>\beta_1$, and that means exactly that $d(x_t,y_t)<\delta$ for $t\in 2^\gamma$ for $\gamma>\beta_1$. The claim is proved.

Now we fix $\xi$ as in the claim. Let
$$t = \sup_{\prec}\set{s\in 2^\xi : \sum_{r\preceq s}d(x_r,y_r)<\frac{d(x,y)}{2(1-\eta)}+\delta}.$$
The supremum is taken in the lexicographical order of $2^\xi$. We claim that $z=x_t$ is the point that we are looking for. Let $\bar{0},\bar{1}\in 2^\xi$ be the constant functions equal to 0 and 1 respectively. On the one hand, using (\ref{ine27}),

\begin{equation}\label{end2}
 d(x,x_t) = d(x_{\bar{0}},x_t) \leq \sum_{r\prec t}d(x_r,y_r) = \sup_{s\prec t}\sum_{r\preceq s}d(x_r,y_r)  \leq \frac{d(x,y)}{2(1-\eta)} + \delta.
\end{equation}

On the other hand, the fact that $t$ is the supremum also implies that
$$ \sum_{r\preceq s} d(x_r,y_r) \geq \frac{d(x,y)}{2(1-\eta)} + \delta \text{ whenever } s\succ t.$$
By (\ref{ine1}), this implies that 
$$ \sum_{r\succ s} d(x_r,y_r) \leq \frac{d(x,y)}{2(1-\eta)}-\delta \text{ whenever } s\succ t.$$
Since $d(x_s,y_s)\leq\delta$ for any $s$, we conclude that
$$ \sum_{r\succeq s} d(x_r,y_r) \leq \frac{d(x,y)}{2(1-\eta)} \text{ whenever } s\succ t,$$
and therefore
$$\sum_{r\succ t} d(x_r,y_r) = \sup_{s\succ t}\sum_{r\succeq s} d(x_r,y_r) \leq \frac{d(x,y)}{2(1-\eta)}.$$
Combining this with (\ref{ine27}) we get that

\begin{equation}\label{end1}
d(x_t,y) = d(x_t,y_{\bar{1}}) \leq \sum_{r\succeq t} d(x_r,y_r) = d(x_t,y_t) + \sum_{r\succ t} d(x_r,y_r) \leq \delta + \frac{d(x,y)}{2(1-\eta)}.
\end{equation}

The inequalities (\ref{end2}) and (\ref{end1}) show that $z=x_t$ is the point that we were looking for, so the proof is over.

\end{proof}

\section*{Acknowledgment}

We would like to thank Abraham Rueda Zoca and Luis Carlos Garc\'ia Lirola for their useful suggestions while preparing this paper.

\end{document}